\numberwithin{equation}{section}
\numberwithin{figure}{section}
\theoremstyle{plain}
\newtheorem{thm}{\protect\theoremname}[section]
\theoremstyle{plain}
\theoremstyle{definition}
\theoremstyle{plain}
\newtheorem{cor}[thm]{\protect\corollaryname}
\theoremstyle{plain}
\theoremstyle{plain}
\providecommand{\definitionname}{Definition}
\providecommand{\lemmaname}{Lemma}
\providecommand{\theoremname}{Theorem}
\providecommand{\corollaryname}{Corollary}
\providecommand{\remarkname}{Remark}
\providecommand{\propositionname}{Proposition}
\DeclareMathOperator{\loc}{loc}
\DeclareMathOperator{\ess}{ess}
\DeclareMathOperator{\cp}{cap}
\DeclareMathOperator{\ACL}{ACL}
\begin{document}

\title[Refined geometric characterizations]{Refined geometric characterizations of weak $p$-quasiconformal mappings}

\author{Ruslan Salimov and Alexander Ukhlov}
\begin{abstract}
In this paper we consider refined geometric characterizations of weak $p$-quasiconformal mappings $\varphi:\Omega\to\widetilde{\Omega}$, where $\Omega$ and $\widetilde{\Omega}$ are domains in $\mathbb R^n$. We prove that mappings with the bounded on the set $\Omega\setminus S$, where a set $S$ has $\sigma$-finite $(n-1)$-measure, geometric $p$-dilatation, are $W^1_{p,\loc}$-- mappings and generate bounded composition operators on Sobolev spaces.
\end{abstract}
\maketitle
\footnotetext{\textbf{Key words and phrases:} Sobolev spaces, Quasiconformal mappings}
\footnotetext{\textbf{2020
Mathematics Subject Classification:} 46E35, 30C65.}

\section{Introduction }

Let $\Omega$ and $\widetilde{\Omega}$ be domains in the Euclidean space $\mathbb R^n$, $n\geq 2$. Recall that a homeomorphic mapping $\varphi:\Omega\to\widetilde{\Omega}$ is called quasiconformal, if the conformal capacity inequality
$$
\cp_n\left(\varphi^{-1}(\widetilde{F}_0),\varphi^{-1}(\widetilde{F}_1);\Omega\right)\leq K_n \cp_n\left(\widetilde{F}_0,\widetilde{F}_1;\widetilde{\Omega}\right)
$$
holds for any condenser $(\widetilde{F}_0,\widetilde{F}_1)\subset\widetilde{\Omega}$.
The quasiconformal mappings have the geometric description in the terms of the geometric conformal dilatation \cite{GV60}: the homeomorphic mapping $\varphi:\Omega\to\widetilde{\Omega}$ is quasiconformal, is and only if
 $$
 \limsup_{r\to 0}  H_{\varphi}(x,r)=\limsup_{r\to 0}\frac{L_{\varphi}(x,r)}{l_{\varphi}(x,r)}\leq H<\infty\,\,\text{in}\,\, \Omega,
 $$
where $L_{\varphi}(x,r)=\max\limits_{|x-y|=r}|\varphi(x)-\varphi(y)|$ and $l_{\varphi}(x,y)=\min\limits_{|x-y|=r}|\varphi(x)-\varphi(y)|$.

This result was refined in \cite{Ge60,Ge62}, where it was proved, in particular, that for quasiconformality of $\varphi$ is sufficient
$$
\limsup_{r\to 0}  H_{\varphi}(x,r)\leq H<\infty\,\, \text{in}\,\, \Omega\setminus S,
$$
where a set $S$ has $\sigma$-finite $(n-1)$-measure. Recently the refined geometric characterizations were obtained for mappings which have the integrable geometric conformal dilatations \cite{KM02,KR05}

The homeomorphic mappings $\varphi:\Omega\to\widetilde{\Omega}$ which satisfy the $p$-capacity inequality
\begin{equation}
\label{p_cap}
\cp_p\left(\varphi^{-1}(\widetilde{F}_0),\varphi^{-1}(\widetilde{F}_1);\Omega\right)\leq K_p \cp_p\left(\widetilde{F}_0,\widetilde{F}_1;\widetilde{\Omega}\right),
\,\,1<p<\infty,
\end{equation}
were considered in \cite{Ge69}, where it was proved, that in the case $n-1<p<n$ the mappings $\varphi^{-1}:\widetilde{\Omega}\to\Omega$ are Lipschitz continuous. This result was extended to the case $p=n-1$ in \cite{SSU}.

The homeomorphic mappings which satisfy the capacity inequality (\ref{p_cap}) generate bounded composition operators on Sobolev spaces \cite{GGR95,U93,VU98}.
The bounded composition operators on Sobolev spaces arise in the Sobolev embedding theory \cite{GG94,GS82} and have applications in the weighted Sobolev spaces theory \cite{GU09} and in the spectral theory of elliptic operators \cite{GU17}. In \cite{U93,VU98} were given various characteristics of homeomorphic mappings $\varphi: \Omega\to\widetilde{\Omega}$, where $\Omega, \widetilde{\Omega}$ are domains in $\mathbb R^n$, which generate by the composition rule $\varphi^{\ast}(f)=f\circ\varphi$ the bounded embedding operators on Sobolev spaces:
\begin{equation}
\label{pq}
\varphi^{\ast}:L^1_p(\widetilde{\Omega})\to L^1_q(\Omega),\,\,1<q\leq p<\infty.
\end{equation}

The mappings generate bounded composition operators \eqref{pq} are called as weak $(p,q)$-quasiconformal mappings \cite{GGR95,VU98} because in the case $p=q=n$ we have usual quasiconformal mappings \cite{VG75}. In \cite{U93,VU98} it was proved that the homeomorphic mapping $\varphi:\Omega\to\widetilde{\Omega}$ is the weak $(p,q)$-quasiconformal mapping,  if and only if $\varphi\in W^1_{1,\loc}(\Omega)$, has finite distortion and
$$
K_{p,q}^{\frac{pq}{p-q}}(\varphi;\Omega)=\int\limits_{\Omega}\left(\frac{|D\varphi(x)|^p}{|J(x,\varphi)|}\right)^{\frac{q}{p-q}}~dx<\infty,
\,\,1< q<p< \infty,
$$
and
$$
K_{p,p}^{p}(\varphi;\Omega)=\ess\sup\limits_{\Omega}\frac{|D\varphi(x)|^p}{|J(x,\varphi)|}<\infty,\,\,1< q=p< \infty.
$$
In the case $1< q=p< \infty$ such mappings are called as a weak $p$-quasiconformal mappings \cite{GGR95}.

The first time the geometric $p$-dilatation of weak $p$-quasiconformal mappings  $\varphi:\Omega\to\widetilde{\Omega}$, $p\ne n$, were introduced in \cite{GGR95} (see also \cite{U24}, for detailed proofs):
$$
H_{\varphi, p}^{\lambda}(x,r)=\frac{L_{\varphi}^{p}(x,r)r^{n-p}}{|\varphi(B(x,\lambda r))|},\,\, \lambda\geq 1,
$$
where $|\cdot|$ denoted the $n$-dimensional Lebesgue measure.

The aim of the present work is to give the refined characterizations of weak $p$-quasiconformal mappings in the terms of the geometric $p$-dilatation. We prove that if $\varphi:\Omega\to\widetilde{\Omega}$ is a homeomorphic mapping with
$$
\limsup_{r\to 0} H_{\varphi, p}^{\lambda}(x,r)\leq H_{p}^{\lambda}<\infty,\,\,\text{on}\,\,\Omega\setminus S,
$$
where a set $S$ has $\sigma$-finite $(n-1)$-measure, then $\varphi\in W^1_{p,\loc}(\Omega)$ and generate a bounded composition operator
\begin{equation*}
\varphi^{\ast}:L^1_p(\widetilde{\Omega})\to L^1_p(\Omega),\,\,1< p<\infty.
\end{equation*}

Hence  homeomorphic mappings $\varphi$, with the bounded on the set $\Omega\setminus S$ the geometric $p$-dilatation, satisfy the capacity inequality (\ref{p_cap}) and are Lipschitz mappings in the case $p>n$ \cite{Ge69}.

Remark that quasiconformal mappings can be defined on metric measure spaces, see, for example, \cite{Heinonen,HeiKosk}. The geometric approach
allows to defined weak $p$-quasiconformal mappings on metric measure spaces.

\section{Composition operators on Sobolev spaces}

\subsection{Sobolev spaces}

Let us recall the basic notions of the Sobolev spaces.
Let $\Omega$ be an open subset of $\mathbb R^n$. The Sobolev space $W^1_p(\Omega)$, $1\leq p\leq\infty$, is defined \cite{M}
as a Banach space of locally integrable weakly differentiable functions
$f:\Omega\to\mathbb{R}$ equipped with the following norm:
\[
\|f\mid W^1_p(\Omega)\|=\| f\mid L_p(\Omega)\|+\|\nabla f\mid L_p(\Omega)\|,
\]
where $\nabla f$ is the weak gradient of the function $f$, i.~e. $ \nabla f = (\frac{\partial f}{\partial x_1},...,\frac{\partial f}{\partial x_n})$. The Sobolev space $W^{1}_{p,\loc}(\Omega)$ is defined as a space of functions $f\in W^{1}_{p}(U)$ for every open and bounded set $U\subset  \Omega$ such that $\overline{U}  \subset \Omega$.

The homogeneous seminormed Sobolev space $L^1_p(\Omega)$, $1\leq p\leq\infty$, is defined as a space
of locally integrable weakly differentiable functions $f:\Omega\to\mathbb{R}$ equipped
with the following seminorm:
\[
\|f\mid L^1_p(\Omega)\|=\|\nabla f\mid L_p(\Omega)\|.
\]

In the Sobolev spaces theory, a crucial role is played by capacity as an outer measure associated with Sobolev spaces \cite{M}. In accordance to this approach, elements of Sobolev spaces $W^1_p(\Omega)$ are equivalence classes up to a set of $p$-capacity zero \cite{MH72}.

Recall that a function $f:\Omega\to\mathbb R$ belongs to the class $\ACL(\Omega)$ if it is absolutely continuous on almost all straight lines which are parallel to any coordinate axis. Note that $f$ belongs to the Sobolev space $W^1_{1,\loc}(\Omega)$ if and only if $f$ is locally integrable and it can be changed by a standard procedure (see, e.g. \cite{M} ) on a set of measure
zero (changed by its Lebesgue values at any point where the Lebesgue values exist) so that a modified function belongs to $\ACL(\Omega)$, and its partial derivatives $\frac{\partial f}{\partial x_i}$, $i=1,...,n$, existing a.e., are locally integrable in $\Omega$.

The mapping $\varphi:\Omega\to\mathbb{R}^{n}$ belongs to the Sobolev space $W^1_{p,\loc}(\Omega)$, if its coordinate functions belong to $W^1_{p,\loc}(\Omega)$. In this case, the formal Jacobi matrix $D\varphi(x)$ and its determinant (Jacobian) $J(x,\varphi)$
are well defined at almost all points $x\in\Omega$. The norm $|D\varphi(x)|$ is the operator norm of $D\varphi(x)$. Recall that a mapping $\varphi:\Omega\to\mathbb{R}^{n}$ belongs to $W^1_{p,\loc}(\Omega)$, is a mapping of finite distortion if $D\varphi(x)=0$ for almost all $x$ from $Z=\{x\in\Omega: J(x,\varphi)=0\}$ \cite{VGR}.

\subsection{Composition operators}

Let $\Omega$ and $\widetilde{\Omega}$ be domains in the Euclidean space $\mathbb R^n$. Then a homeomorphic mapping $\varphi:\Omega\to\widetilde{\Omega}$ generates a bounded composition
operator
\[
\varphi^{\ast}:L^1_p(\widetilde{\Omega})\to L^1_q(\Omega),\,\,\,1\leq q\leq p\leq\infty,
\]
by the composition rule $\varphi^{\ast}(f)=f\circ\varphi$, if for
any function $f\in L^1_p(\widetilde{\Omega})$, the composition $\varphi^{\ast}(f)\in L^1_q(\Omega)$
is defined quasi-everywhere in $\Omega$ and there exists a constant $K_{p,q}(\varphi;\Omega)<\infty$ such that
\[
\|\varphi^{\ast}(f)\mid L^1_q(\Omega)\|\leq K_{p,q}(\varphi;\Omega)\|f\mid L^1_p(\widetilde{\Omega})\|.
\]

Recall that the $p$-dilatation \cite{Ge69} of a Sobolev mapping $\varphi: \Omega\to \widetilde{\Omega}$ at the point $x\in\Omega$ is defined as
$$
K_p(x)=\inf \{k(x): |D\varphi(x)|\leq k(x) |J(x,\varphi)|^{\frac{1}{p}}\}.
$$

\begin{thm}
\label{CompTh} Let $\varphi:\Omega\to\widetilde{\Omega}$ be a homeomorphic mapping
between two domains $\Omega$ and $\widetilde{\Omega}$. Then $\varphi$ generates a bounded composition
operator
\[
\varphi^{\ast}:L^1_p(\widetilde{\Omega})\to L^1_{q}(\Omega),\,\,\,1< q\leq p\leq\infty,
\]
 if and only if $\varphi\in W^1_{q,\loc}(\Omega)$
and
\[
K_{p,q}(\varphi;\Omega) := \|K_p \mid L_{\kappa}(\Omega)\|<\infty, \,\,1/q-1/p=1/{\kappa}\,\,(\kappa=\infty, \text{ if } p=q).
\]
The norm of the operator $\varphi^\ast$ is estimated as $\|\varphi^\ast\| \leq K_{p,q}(\varphi;\Omega)$.
\end{thm}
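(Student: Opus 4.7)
The plan is to prove the two implications separately. For sufficiency the argument is a chain-rule pointwise bound followed by Hölder's inequality and the change of variables formula for Sobolev homeomorphisms. For necessity the core is to associate with $\varphi^{\ast}$ a bounded countably additive Borel set function on $\widetilde{\Omega}$ whose Radon--Nikodym derivative can be identified with $K_p^{\kappa}$.

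For sufficiency, assume $\varphi\in W^{1}_{q,\loc}(\Omega)$ and $\|K_p\mid L_\kappa(\Omega)\|<\infty$. The latter forces $K_p(x)<\infty$ a.e., so $\varphi$ has finite distortion. For $f\in L^{1}_{p}(\widetilde{\Omega})$, smoothed and then passed to the limit, the chain rule for compositions with a Sobolev homeomorphism of finite distortion gives
$$
|\nabla(f\circ\varphi)(x)|\le|\nabla f(\varphi(x))|\,|D\varphi(x)|
\quad\text{for a.e. } x\in\Omega.
$$
I rewrite $|D\varphi(x)|=K_p(x)\,|J(x,\varphi)|^{1/p}$, which holds trivially on $\{J(\cdot,\varphi)=0\}$ by finite distortion, and apply Hölder with exponents $\kappa/q$ and $p/q$, conjugate since $q/\kappa+q/p=1$. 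The change of variables formula then converts $\int_\Omega |\nabla f(\varphi(x))|^p |J(x,\varphi)|\,dx$ into $\int_{\widetilde{\Omega}}|\nabla f|^p\,dy$, delivering
$$
\|\nabla(f\circ\varphi)\mid L_q(\Omega)\|\le \|K_p\mid L_\kappa(\Omega)\|\cdot\|\nabla f\mid L_p(\widetilde{\Omega})\|,
$$
which is boundedness of $\varphi^{\ast}$ with the claimed norm estimate. The degenerate cases $q=p$ (take $\kappa=\infty$ and use an $L_\infty$--$L_q$ pairing) and $p=\infty$ are handled analogously.

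For necessity, assume $\varphi^\ast$ is bounded with norm $K_{p,q}(\varphi;\Omega)$. Plugging Lipschitz truncations of the coordinate functions of $\widetilde{\Omega}$ into $\varphi^\ast$ shows that each coordinate of $\varphi$ lies locally in $L^1_q$, hence $\varphi\in W^{1}_{q,\loc}(\Omega)$. To extract the $L_\kappa$ integrability of $K_p$, I introduce on open subsets $A\subset\widetilde{\Omega}$ the monotone set function
$$
\Phi(A):=\sup\bigl\{\|\nabla(f\circ\varphi)\mid L_q(\Omega)\|^{\kappa/q}:\; f\in L^1_p(\widetilde{\Omega}),\ \operatorname{supp}\nabla f\subset A,\ \|f\mid L^1_p(\widetilde{\Omega})\|\le 1\bigr\}.
$$
The operator bound yields $\Phi(\widetilde{\Omega})\le K_{p,q}(\varphi;\Omega)^{\kappa}$. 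A capacitary argument, using that $p$-capacity is the outer measure associated with $L^1_p(\widetilde{\Omega})$, shows $\Phi$ is countably additive on Borel subsets and absolutely continuous with respect to Lebesgue measure. Lebesgue differentiation produces an $L_1(\widetilde{\Omega})$ density; transporting it back to $\Omega$ by the change of variables formula and testing against affine functions localised at Lebesgue points identifies it a.e.\ with $K_p^\kappa$.

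The main obstacle is this last step: proving $\Phi$ is countably additive and absolutely continuous, and that its density coincides with $K_p^\kappa$. A priori $\varphi$ need not preserve sets of $p$-capacity zero in $\widetilde{\Omega}$; this must be inferred from the operator bound via the capacitary definition of Sobolev functions, and the change of variables formula then has to be justified for a Sobolev homeomorphism whose Jacobian integrability is known only a posteriori.
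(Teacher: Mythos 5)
The paper itself contains no proof of Theorem~\ref{CompTh}: it is quoted from \cite{VG75,U93,V88}, so there is nothing internal to compare against. Your outline does follow the method of those references: H\"older plus the weak change of variables formula for sufficiency, and the additive set function attached to the operator for necessity. The sufficiency half is essentially complete once you justify the chain rule for $f\circ\varphi$ (this is exactly where Hajlasz's change of variables formula \cite{H93} enters, and where one uses that $f\circ\varphi$ need only be defined quasi-everywhere); your exponents $\kappa/q$ and $p/q$ are the right conjugate pair.

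The necessity half is where the real content lies, and there you assert rather than prove the key lemma. Two concrete points. First, the normalization of your set function is off: for $\Phi$ to be countably additive you need $\Phi(A)=\sup\|\varphi^{\ast}f\mid L^1_q(\Omega)\|^{\kappa}$ over admissible $f$ with $\|f\mid L^1_p\|\leq 1$, not the power $\kappa/q$; with the correct power, quasi-additivity over disjoint open sets follows from the discrete H\"older inequality with exponents $p/(p-q)$ and $p/q$ applied to $\|\varphi^{\ast}f\|_q^q=\sum\|\varphi^{\ast}f_i\|_q^q$ and $\|f\|_p^p=\sum\|f_i\|_p^p$ --- no capacitary argument is needed for this step, and absolute continuity with respect to Lebesgue measure is not needed either, since the Lebesgue--Radon--Nikodym differentiation theorem for monotone countably additive set functions already gives $\int\Phi'\leq\Phi(\widetilde{\Omega})$ and the singular part can be discarded because only an upper bound on $\int_\Omega K_p^{\kappa}\,dx$ is wanted. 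Second, the identification of the density with $K_p^{\kappa}$ is not done by transporting an $L_1(\widetilde{\Omega})$ function back to $\Omega$; the standard route is to test $\Phi$ on small balls $B(y,r)$ with cut-off coordinate functions to get $\bigl(\int_{\varphi^{-1}(B)}|D\varphi|^q\,dx\bigr)^{1/q}\leq c\,\Phi(B)^{1/\kappa}|B|^{1/p}$, then differentiate the pulled-back set function $A\mapsto\Phi(\varphi(A))$ at Lebesgue points of $|D\varphi|^q$ and of the volume derivative to obtain $|D\varphi(x)|^q\leq c\,\Phi'\bigl(\varphi(x)\bigr)^{q/\kappa}|J(x,\varphi)|^{q/p}$ a.e., which is exactly $K_p^{\kappa}\leq c\,(\Phi\circ\varphi)'$. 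As written, your last step is a statement of the goal rather than an argument, and you correctly flag it as the main obstacle; filling it requires precisely the two ingredients above.
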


This theorem in the case $p=q=n$ was given in the work \cite{VG75}. The general case $1\leq q\leq p<\infty$ was proved in \cite{U93}, where the weak change of variables formula \cite{H93} was used (see, also the case $n<q=p<\infty$ in \cite{V88}).

\section{Refined geometric characterizations of mappings}

Let a $\varphi:\Omega\to \widetilde{\Omega} $  be a homeomorphic mapping. Recall the notion of the geometric $p$-dilatation, $1<p<\infty$, \cite{GGR95}. Let
$$
H_{\varphi,p}^{\lambda}(x,r)=\frac{L^p_{\varphi}(x,r)r^{n-p}}{|\varphi\left(B(x,\lambda r)\right)|}\,,\,\,\lambda\geq 1,
$$
where $L_{\varphi}(x,r)=\max\limits_{|x-y|=r}|\varphi(x)-\varphi(y)|$. Then {\it the geometric $p$-dilatation} of $\varphi$ at $x$ is defined as
 $$
 H_{\varphi,p}^{\lambda}(x)= \limsup_{r\to 0} \, H_{\varphi,p}^{\lambda}(x,r)\,.
 $$
In the case $\lambda=1$ we will denote the geometric $p$-dilatation by the symbol  $H_{\varphi,p}(x)$.

Recall that a set $S\subset {\mathbb R}^n$ is said to have a {\it $\sigma$-finite $(n-1)$-dimensional measure} \cite{KM02}, if the set $S$ is of the form $S=\cup S_i$ where $H^{n-1}(S_i)<\infty$ and $H^{n-1}$ refers to the $(n-1)$-dimensional Hausdorff measure.

\begin{thm} Let $1<p<\infty$ and   $\varphi:\Omega\to \widetilde{\Omega} $  be a homeomorphic mapping. If
\begin{equation}\label{eqMpHsigma}
\limsup_{r\to 0} \, H_{\varphi,p}(x,r)\leq H_p <\infty  \ {\text for\  each}\  x\in \Omega\setminus S,
\end{equation}
where $S$ has $\sigma$-finite $(n-1)$-measure, then  $\varphi\in {\rm ACL}(\Omega)$.
\end{thm}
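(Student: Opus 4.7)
The plan is to extract a pointwise Lipschitz upper bound from \eqref{eqMpHsigma} by combining it with Lebesgue differentiation of the image measure, prove $L^p$--integrability of this bound, and then deduce absolute continuity on almost every coordinate line by a Dini--derivative argument together with Eilenberg slicing of the exceptional set $S$.

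Let $\nu(E):=|\varphi(E)|$, which is a Radon measure on $\Omega$ (finite on compact sets since $\varphi$ is a homeomorphism). By the Lebesgue differentiation theorem, the density $\nu'(x):=\lim_{r\to 0}\nu(B(x,r))/|B(x,r)|$ is finite at Lebesgue--a.e. $x\in\Omega$. Since $H^{n-1}(S)$ is $\sigma$--finite we have $|S|=0$, so hypothesis \eqref{eqMpHsigma}, rewritten as $(L_\varphi(x,r)/r)^p\le H_p\,\nu(B(x,r))/r^n$ for small $r$, yields after passing to the $\limsup$ in $r$
$$\Lip(\varphi,x):=\limsup_{r\to 0}\frac{L_\varphi(x,r)}{r}\ \le\ C_{n,p}\,H_p^{1/p}\,\nu'(x)^{1/p}\quad\text{for a.e. }x\in\Omega.$$
Integrating over any compact $K\subset\Omega$ and using $\int_K\nu'\,dx\le\nu(K)=|\varphi(K)|<\infty$ shows that $\Lip(\varphi,\cdot)\in L^p_{\loc}(\Omega)$.

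To prove $\varphi\in\ACL(\Omega)$ I would fix a closed coordinate cube $Q=Q'\times[a,b]\subset\Omega$ and the direction $e_n$ (other coordinate directions being symmetric), and work with the lines $I_y=\{y\}\times[a,b]$, $y\in Q'$. By Fubini, for a.e. $y\in Q'$ the slice $g_y(t):=\Lip(\varphi,(y,t))$ belongs to $L^p([a,b])\subset L^1([a,b])$. Decomposing $S=\bigcup_k S_k$ with $H^{n-1}(S_k)<\infty$ and applying Eilenberg's slicing inequality to the orthogonal projection $\mathbb R^n\to\mathbb R^{n-1}$ yields $\int_{Q'}H^0(I_y\cap S_k)\,dy\le C\,H^{n-1}(S_k)<\infty$ for every $k$, so for a.e. $y\in Q'$ the set $I_y\cap S$ is at most countable. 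Call such $y$ admissible; for an admissible $y$ the map $F(t):=\varphi(y,t)$ is continuous on $[a,b]$ and at every $t$ with $(y,t)\notin S$ satisfies $\limsup_{h\to 0}|F(t+h)-F(t)|/|h|\le g_y(t)$, i.e. a pointwise Lipschitz bound holding off a countable subset of $[a,b]$.

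The final step is to upgrade this pointwise bound into absolute continuity of $F$ on $[a,b]$. Setting $\Phi(u):=\int_a^u g_y(\tau)\,d\tau$ (absolutely continuous) and $H(u):=|F(u)-F(a)|$, the continuous function $u\mapsto H(u)-\Phi(u)-\varepsilon u$ has upper right Dini derivative at most $-\varepsilon<0$ outside a countable set, so a classical real--variable monotonicity lemma forces it to be non--increasing on $[a,b]$; letting $\varepsilon\to 0$ gives $|F(s)-F(t)|\le\int_t^s g_y(\tau)\,d\tau$ for all $a\le t<s\le b$, and absolute continuity of the integral of $g_y\in L^1$ now implies that $F$ is absolutely continuous. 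The principal obstacle is exactly this passage: the $\sigma$--finiteness of $H^{n-1}(S)$ must be converted, through Eilenberg's slicing inequality, into countability of $I_y\cap S$ for a.e. line, which is what allows the Dini--derivative monotonicity lemma to conclude absolute continuity using only continuity of $\varphi$ at the exceptional points of $S$.
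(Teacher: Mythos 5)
Your overall architecture is sound up to the last step, and it genuinely differs from the paper's: you reduce everything to an a.e.\ pointwise bound $\Lip(\varphi,x)\leq C H_p^{1/p}\nu'(x)^{1/p}$ and then try to integrate this bound along lines via a Dini-derivative monotonicity lemma, whereas the paper never forms a pointwise Lipschitz function at all -- it differentiates only the \emph{projected} $(n-1)$-dimensional measure $\Phi(A)=|\varphi(A\times I)|$ at the base point $z$ of the line, and on the line itself it runs a Besicovitch covering argument using the dilatation inequality $L_\varphi^p(x,r_x)r_x^{n-p}\leq\widetilde H_p|\varphi(B(x,r_x))|$ at \emph{positive} radii, followed by the discrete H\"older inequality. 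The Eilenberg-slicing step (countability of $I_y\cap S$ for a.e.\ line) is correct and is exactly the paper's appeal to the Gross theorem.

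The gap is in the final step. The bound $g_y(t)=\Lip(\varphi,(y,t))\leq C H_p^{1/p}\nu'((y,t))^{1/p}<\infty$ is available only where the volume derivative of $\nu$ exists finitely, i.e.\ off an $n$-null set $N$; Fubini gives you only that $N\cap I_y$ is $1$-null for a.e.\ $y$, \emph{not countable}. So on a typical line the pointwise Lipschitz bound with a finite, integrable majorant holds only outside a null set (the countable set $S\cap I_y$ plus the possibly uncountable null set $N\cap I_y$), and the Scheeffer-type monotonicity lemma you invoke genuinely requires the exceptional set to be countable: the injective continuous curve $F(t)=(t,c(t))$, with $c$ the Cantor function, satisfies $\limsup_{h\to0}|F(t+h)-F(t)|/|h|\leq 1$ for every $t$ outside a null set yet is not absolutely continuous, so no argument using only ``pointwise Lipschitz bound off a null set'' can close the proof. (There is also a secondary, repairable issue: even where $g_y$ is finite, $D^{+}\bigl(H-\Phi\bigr)(t)\leq g_y(t)-D_{+}\Phi(t)$ is $\leq 0$ only at Lebesgue points of $g_y$, again a co-null rather than co-countable set; the standard fix is to replace $g_y$ by a lower semicontinuous majorant via Vitali--Carath\'eodory.) To repair the main gap you must use the hypothesis at positive radii rather than only in the limit $r\to0$ -- covering the segment minus $S$ by balls on which the dilatation inequality holds, bounding $\sum_i|\varphi(B_i)|$ by bounded overlap, and differentiating the projected measure only at the a.e.\ base point $z$ -- which is precisely the route the paper takes.
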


\begin{proof} Fix an arbitrary cube $P$, $P\subset\Omega$ with edges parallel to coordinate axes. We
prove that $\varphi$ is absolutely continuous on almost all intersections of $P$ with lines
parallel to the axis $x_n$. Let $P_0$ be the orthogonal projection of $P$ on subspace
$\{x_n = 0\} ={\mathbb R}^{n-1}$  and $I$ be the orthogonal projection of $P$ on the axis $x_n$.
Then $P = P_0 \times I$.

Since $\varphi$ is the homeomorphic mapping then the Lebesgue measure $\Phi(E)=|\varphi(E)|$ induces by the rule $\Phi(A,P)=\Phi(A\times I)$ the monotone countable-additive function defined on measurable subsets of $P_0$. By the Lebesgue theorem on differentiability
(see, for example, \cite{RR55}),  the upper $(n-1)$-dimensional volume derivative
$$
\Phi'(z,P)=\limsup_{r\to 0} \frac{\Phi\left(B^{n-1}(z,r),P\right)}{\omega_{n-1}r^{n-1}}<\infty
$$
for almost all points $z\in  P_0$. Here $B^{n-1}(z,r)$ is an $(n-1)$-dimensional ball with a center at $z\in P_0$ and the radius $r$ and $\omega_{n-1}$ is the $(n-1)$-measure of $(n-1)$-dimensional unit ball.

By the Gross theorem (see e.g. \cite{V71}) for a.e. segments $I$ parallel to some coordinate axis, the set $S\cap I$ is countable.
Let $0<r<\delta$ and $\varepsilon>0$. For each $k=1,2,\ldots$, we define the sets
$$
F_k=\left\{x\in B^{n-1}(z,r)\times \bigcup\limits_{j} I_j:\,
\frac{L^p_{\varphi}(x,r)r^{n-p}}{|\varphi\left(B(x,r)\right)|}\leq \widetilde{H}_p, {\rm for \ all}\  r<\frac{1}{k}\right\}\,,
$$
where a constant $\widetilde{H_p}>H_p$  depends  on $p$ and  $n$ only. The sets $F_k$ are Borel sets,
 $$
 B^{n-1}(z,r)\times \bigcup\limits_{j} I_j\setminus S=\bigcup\limits_{k} F_k,
 $$
for any $k$ there exists an open set $U_k$ such that $F_k\subset U_k $, where $I_j=(a_j,b_j)$, $a_j,b_j\in\mathbb Q$,  and
$$
 |U_k|\leq |F_k| +\frac{\varepsilon}{2^{2k}}\,.
$$
Fix a number $k$. Then for every $x\in F_k$ there exists $r_x>0$ such that

\noindent
(i) $0<r_x< \min\{r, d, |a_j-b_j|\}/10$,

\noindent
(ii) $L^p_{\varphi}(x, r_x) r_x^{n-p}< \widetilde{H}_p |\varphi (B(x,r_x))|$,  and

\noindent
(iii) $B_x\subset U_k$.

By the Besicovitch covering theorem (see, for example, \cite{Fe69}) there exists a countable sequence of  balls $B_1, B_2,\ldots $  from the covering $\{\overline{B}(x,r_x)\}$ so that

$$
B^{n-1}(z,r)\times \bigcup\limits_{j} I_j\subset \bigcup\limits_{j} \overline{B}_j \subset B^{n-1}(z,2r) \times [a-d, b+d],
$$
and $\sum_j \chi_{\overline{B}_j}(x)\leq c(n)$ for every $x\in \mathbb{R}^n$.

For arbitrary number $l\in\mathbb N$ we define the function

$$
\rho(x) = \frac{1}{G}\sum\limits_{i} \frac{L_\varphi(x_i,r_i) }{r_i}\chi_{2B_i}(x)\,,
$$
where $G=\sum\limits_{j=1}^{l} |\varphi(z, b_j)-\varphi(z, a_j)|$. This function $\rho$ is a Borel function, because it is a countable sum of (simple) Borel functions.

Now we estimate the volume integral of the function $\rho$. First of all
\begin{multline*}
\int\limits_{B^{n-1}(z,r)\times \bigcup\limits_{j} I_j} \rho(x)\,  dx \\
\geq\frac{1}{G} \int\limits_{B^{n-1}(z,r)} \,\int\limits_{\bigcup\limits_{j} I_j} \sum\limits_{B_i\cap(\{\zeta\}\times \bigcup\limits_{j} I_j)\neq\emptyset}  \frac{L_{\varphi}(x_i,r_i)}{r_i} \, \chi_{2B_i}(\zeta, x_n)\, dx_n \, d\zeta\,.
\end{multline*}

Note that
$$
\int\limits_{\bigcup\limits_{j} I_j}  \chi_{2B_i}(\zeta, x_n)\, dx_n  \geq \frac{1}{2} \, {\rm diam}(B_i)=r_i
$$
for the balls $B_i$ such that  $B_i\cap(\{\zeta\}\times \bigcup\limits_{j} I_j)\neq\emptyset$. Moreover, for almost every
$\zeta\in B^{n-1}(z,r)$, the sets $\varphi (B_i)$ cover the set
$\varphi\left(\{\zeta\}\times \bigcup\limits_j I_j\right)$ up to a countable set,
because $S$ has $\sigma$-finite $(n-1)$-measure (see Theorem 30.16 in \cite{V71}). Thus, since
$r<\delta$ , we have that
$$
\sum\limits_{B_i\cap(\{\zeta\}\times \bigcup\limits_{j} I_j)\neq\emptyset}  L_{\varphi}(x_i, r_i)  \geq  \frac{1}{2} \sum\limits_{B_i\cap(\{\zeta\}\times \bigcup\limits_{j} I_j)\neq\emptyset} {\rm diam}(\varphi B_i)\geq\frac{1}{8} \, G
$$
for almost every $\zeta\in B^{n-1}(z,r)$.

So
\begin{equation}\label{incrn-1}
\int\limits_{B^{n-1}(z,r)\times \bigcup\limits_{j} I_j} \rho(x)\,  dx \geq c(n) r^{n-1}\,.
\end{equation}

Next we establish the upper bound for the integral in the right side of the inequality (\ref{incrn-1}). Using the monotone
convergence theorem, we obtain the estimate
$$
\int\limits_{B^{n-1}(z,r)\times \bigcup\limits_{j} I_j} \rho(x)\,  dx \leq \frac{c(n)}{G} \sum\limits_{i} L_{\varphi}(x_i, r_i) r_i^{n-1}\,.
$$
Hence, by using the discrete H\"{o}lder inequality, we obtain
\begin{multline*}
\int\limits_{B^{n-1}(z,r)\times \bigcup\limits_{j} I_j} \rho(x)\,  dx \leq \frac{c(n)}{G} \sum\limits_{i} \frac{L_{\varphi}(x_i, r_i)}{|\varphi(B_i)|^\frac{1}{p}}\,  |\varphi(B_i)|^\frac{1}{p} \, r_i^{n-1}
\\
\leq \frac{c(n)}{G} \left(\sum\limits_{i} \left(\frac{L_{\varphi}(x_i, r_i)}{|\varphi(B_i)|^\frac{1}{p}} \,  r_i^{n-1}\right)^{\frac{p}{p-1}} \right)^{\frac{p-1}{p}}\left(\sum\limits_{i} |\varphi(B_i)|\right)^\frac{1}{p}\,.
\end{multline*}
Thus,
\begin{multline*}
\int\limits_{B^{n-1}(z,r)\times \bigcup\limits_{j} I_j} \rho(x)\,  dx \\
\leq \frac{c(n,p)}{G} \left(\sum\limits_{i} \left(\frac{L^p_{\varphi}(x_i, r_i)}{|\varphi(B_i)|}\, r^{n-p}_i\right)^\frac{1}{p-1}  |B_i|\right)^{\frac{p-1}{p}} \left(\sum\limits_{i} |\varphi(B_i)|\right)^\frac{1}{p}\,,
\end{multline*}
where $c(n,p)$ is a positive constant that depends on $n$ and $p$  only. Hence,
$$
\int\limits_{B^{n-1}(z,r)\times \bigcup\limits_{j} I_j} \rho(x)\,  dx \leq \frac{c(n,p)}{G} (\widetilde{H}_p)^\frac{1}{p} \left(\sum\limits_{i} |B_i|\right)^{\frac{p-1}{p}} \left(\sum\limits_{i} |\varphi(B_i)|\right)^\frac{1}{p}\,.
$$

For the last term in this inequality we have that
$$
\sum\limits_{i} |\varphi(B_i)|\leq c(n) |\varphi\left(B^{n-1}(z,2r)\times[a-d, b+d]\right)|=c(n) \Phi\left(B^{n-1}(z,2r)\right)
$$
because the overlapping of the balls was bounded.

Thus,
\begin{equation}\label{introup}
\int\limits_{B^{n-1}(z,r)\times \bigcup\limits_{j} I_j} \rho(x)\,  dx \leq \widetilde{c}(n,p)(\widetilde{H}_p)^\frac{1}{p} G^{-1} \left(\sum\limits_{i}\, |B_i|\right)^\frac{p-1}{p} \left(\Phi\left(B^{n-1}(z,2r)\right)\right)^\frac{1}{p}\,,
\end{equation}
where $\widetilde{c}(n,p)$ is a positive constant that depends on $n$ and $p$  only.

Combining (\ref{incrn-1}) and (\ref{introup}), we have
$$
G\, \leq c(n,p)(\widetilde{H}_p)^\frac{1}{p}  \left(\frac{\sum\limits_{i} |B_i|}{\omega_{n-1}r^{n-1}}\right)^\frac{p-1}{p} \left(\frac{\Phi\left(B^{n-1}(z,2r)\right)}{\omega_{n-1}(2r)^{n-1}}\right)^\frac{1}{p}\,,
$$
where $c(n,p)$ is a some constant.

Since  $ |B_i|\leq \Omega_{n-1} (b_i-a_i) r^{n-1}$, then
\begin{multline*}
\sum\limits_{j=1}^{l} |\varphi(z, b_j)-\varphi(z, a_j)|\, \\
\leq c(n,p)(\widetilde{H}_p)^\frac{1}{p}  \left( \sum\limits_{j=1}^{l} | b_j- a_j|\right)^\frac{p-1}{p} \left(\frac{\Phi\left(B^{n-1}(z,2r)\right)}{\omega_{n-1}(2r)^{n-1}}\right)^\frac{1}{p}\,.
\end{multline*}

Thus, letting $r\to 0$, we get
$$
\sum\limits_{j=1}^{l} |\varphi(z, b_j)-\varphi(z, a_j)|\, \leq c(n,p)(\widetilde{H}_p)^\frac{1}{p}  \left( \sum\limits_{j=1}^{l} | b_j- a_j|\right)^\frac{p-1}{p} \left(\Phi'(z)\right)^\frac{1}{p}\,.
$$
Hence $\varphi\in {\rm ACL}(\Omega)$.
\end{proof}

In the case $\lambda>1$ by using corresponding calculations we have the following assertion.

\begin{thm}
Let $1<p<\infty$ and   $\varphi:\Omega\to \widetilde{\Omega}$  be a homeomorphic mapping. If
\begin{equation}
\limsup_{r\to 0} \, H_{\varphi,p}^{\lambda}(x,r)\leq H_p^{\lambda} <\infty  \ {\text for\  each}\  x\in \Omega\setminus S,
\end{equation}
where $S$ has $\sigma$-finite $(n-1)$-measure, then  $\varphi\in {\rm ACL}(\Omega)$.
\end{thm}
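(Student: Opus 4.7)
The proof mirrors that of Theorem 3.1, so my plan is to re-run that argument and track which steps are altered by the enlargement factor $\lambda>1$ appearing in the denominator of $H_{\varphi,p}^{\lambda}$. First I fix a cube $P=P_{0}\times I\subset\Omega$ with edges parallel to the coordinate axes, introduce the monotone set function $\Phi(A,P)=|\varphi(A\times I)|$ on the projection $P_{0}$, and apply the Lebesgue differentiation theorem together with Gross' theorem exactly as before to reduce to segments $I$ along which $\Phi'(z,P)<\infty$ and $S\cap I$ is countable.

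At each $x\in F_{k}$ I would choose $r_{x}>0$ small enough that
\[
L^{p}_{\varphi}(x,r_{x})\,r_{x}^{n-p}\leq\widetilde{H}_{p}^{\lambda}\,|\varphi(B(x,\lambda r_{x}))|,
\]
with the auxiliary requirements $\lambda r_{x}<\min\{r,d,|b_{j}-a_{j}|\}/10$ and $B(x,\lambda r_{x})\subset U_{k}$. Applying the Besicovitch covering theorem to the family $\{B(x,r_{x})\}$ produces a countable subfamily $\{B_{i}\}$ covering $F_{k}$ with multiplicity $\sum_{i}\chi_{B_{i}}\leq c(n)$ and with all enlarged balls $\lambda B_{i}$ contained in $B^{n-1}(z,2r)\times[a-d',b+d']$ for a slightly thickened slab. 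The test function $\rho(x)=G^{-1}\sum_{i}L_{\varphi}(x_{i},r_{i})r_{i}^{-1}\chi_{2B_{i}}(x)$ and the lower estimate $\int_{B^{n-1}(z,r)\times\bigcup_{j}I_{j}}\rho\,dx\geq c(n)\,r^{n-1}$ transfer verbatim from Theorem 3.1, since they depend only on the covering of $F_{k}$ by $\{B_{i}\}$ and on the fact that $\{\varphi(B_{i})\}$ covers $\varphi(\{\zeta\}\times\bigcup_{j}I_{j})$ up to a countable set.

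The modification appears in the upper bound. In the Hölder step I insert the new dilatation inequality $L^{p}_{\varphi}(x_{i},r_{i})r_{i}^{n-p}\leq\widetilde{H}_{p}^{\lambda}|\varphi(\lambda B_{i})|$ in place of the $\lambda=1$ version, obtaining
\[
\int\rho(x)\,dx\leq\frac{c(n,p)\,(\widetilde{H}_{p}^{\lambda})^{1/p}}{G}\Bigl(\sum_{i}|B_{i}|\Bigr)^{\frac{p-1}{p}}\Bigl(\sum_{i}|\varphi(\lambda B_{i})|\Bigr)^{\frac{1}{p}}.
\]
Granted a multiplicity bound $\sum_{i}\chi_{\lambda B_{i}}\leq c(n,\lambda)$ for the dilated family, this gives $\sum_{i}|\varphi(\lambda B_{i})|\leq c(n,\lambda)\Phi(B^{n-1}(z,2r))$, and combining with the lower bound and letting $r\to 0$ reproduces the estimate
\[
\sum_{j=1}^{l}|\varphi(z,b_{j})-\varphi(z,a_{j})|\leq c(n,p,\lambda)\Bigl(\sum_{j=1}^{l}|b_{j}-a_{j}|\Bigr)^{\frac{p-1}{p}}\bigl(\Phi'(z)\bigr)^{\frac{1}{p}},
\]
from which $\varphi\in\ACL(\Omega)$ follows along almost every segment parallel to the coordinate axis.

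The main obstacle I anticipate is justifying the multiplicity bound for $\{\lambda B_{i}\}$, since Besicovitch directly controls only $\sum_{i}\chi_{B_{i}}$ and enlargement by a fixed factor $\lambda$ need not preserve such a bound in general. The cleanest remedy is to apply the Besicovitch theorem from the outset to the enlarged family $\{B(x,\lambda r_{x})\}_{x\in F_{k}}$, so that the dilated balls inherit multiplicity $\leq c(n)$ by construction; the lower bound is then re-derived using the monotonicity $L_{\varphi}(x,r_{i})\leq L_{\varphi}(x,\lambda r_{i})$ and the fact that $\{\varphi(\lambda B_{i})\}$ still covers $\varphi(\{\zeta\}\times\bigcup_{j}I_{j})$ up to countably many points, introducing only an extra geometric factor depending on $n$ and $\lambda$ that is absorbed into the constants as $r\to 0$.
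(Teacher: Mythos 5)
The paper itself gives no proof of this theorem --- it is stated immediately after Theorem 3.1 with only the remark ``by using corresponding calculations,'' so the only possible comparison is with the $\lambda=1$ argument you are adapting. You correctly identify the one place where that adaptation is non-trivial: the upper bound produces the sum $\sum_i|\varphi(\lambda B_i)|$, and the Besicovitch multiplicity bound controls only the undilated family. This is a genuine obstruction, not a technicality: a bounded-overlap (even pairwise disjoint) family of balls of widely varying radii can have $\lambda$-dilates with unbounded overlap (take $x_k=(2^{-k},0,\dots,0)$, $r_k=0.4\cdot 2^{-k}$; for $\lambda=3$ every dilate contains the origin), and no subfamily of such a configuration that still covers the centers avoids this. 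So a multiplicity bound $\sum_i\chi_{\lambda B_i}\leq c(n,\lambda)$ cannot simply be ``granted.''

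The remedy you propose, however, does not close the gap. If you run Besicovitch on the enlarged family $\{B(x,\lambda r_x)\}$, the selected dilated balls cover $F_k$ with bounded overlap, but the undilated balls $B_i$ no longer cover the segment $\{\zeta\}\times\bigcup_j I_j$; the covering of the image segment is now by $\{\varphi(\lambda B_i)\}$, which yields only $\sum_i L_\varphi(x_i,\lambda r_i)\gtrsim G$. Your test function contains $L_\varphi(x_i,r_i)$, and the monotonicity $L_\varphi(x_i,r_i)\leq L_\varphi(x_i,\lambda r_i)$ points in the \emph{wrong} direction --- it gives no lower bound on $\sum_i L_\varphi(x_i,r_i)$, so the estimate $\int\rho\,dx\geq c(n)r^{n-1}$ is lost. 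If instead you repair the lower bound by putting $L_\varphi(x_i,\lambda r_i)$ into $\rho$, then the H\"older step forces you to invoke the dilatation inequality at radius $\lambda r_i$, which reintroduces the doubly dilated family $\{\lambda^2 B_i\}$ whose overlap is again uncontrolled; the difficulty is merely shifted, not removed. In short: the lower bound needs the \emph{covering} family to carry the $L_\varphi$-factors appearing in $\rho$, while the upper bound needs the \emph{dilated} images of those same balls to have bounded overlap, and no single application of Besicovitch (to either family) delivers both. A correct proof must supply an additional idea here --- e.g.\ a covering argument adapted to the pullback measure $\Phi$, or a chaining/decomposition of the dilated balls --- and as written your proposal does not contain it.
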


Now we consider differentiability of homeomorphic mappings with a bounded geometric $p$-dilatation.

\begin{thm} Let $1<p<\infty$ and $\varphi:\Omega\to \widetilde{\Omega}$  be a homeomorphic mapping. If
\begin{equation*}
%\label{eqMpHsigma}
\limsup_{r\to 0} \, H_{\varphi,p}(x,r)\leq H_p <\infty  \ {\text for\  each}\  x\in \Omega\setminus S,
\end{equation*}
where $|S|=0$, then $\varphi$ is differentiable almost everywhere in $\Omega$.
\end{thm}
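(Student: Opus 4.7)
The plan is to reduce the conclusion to Stepanov's differentiability theorem, which asserts that any mapping $f:\Omega\to\mathbb{R}^n$ is differentiable almost everywhere on the set
$$
\Bigl\{x\in\Omega:\limsup_{y\to x}\frac{|f(y)-f(x)|}{|y-x|}<\infty\Bigr\}.
$$
Because $L_{\varphi}(x,r)=\max_{|y-x|=r}|\varphi(y)-\varphi(x)|$, the limsup above equals $\limsup_{r\to 0}L_{\varphi}(x,r)/r$, so it suffices to establish that this quantity is finite at almost every $x\in\Omega$. Note that, unlike the ACL theorems proved earlier, we no longer need a hypothesis on the $(n-1)$-measure of $S$, because we will only need the pointwise bound at almost every point.

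The hypothesis yields a direct estimate on $L_{\varphi}(x,r)/r$. Since $|S|=0$, almost every $x\in\Omega$ lies in $\Omega\setminus S$, and for each such $x$ there exist a constant $\widetilde{H}_{p}>H_{p}$ and a radius $r_{0}(x)>0$ with
$$
\frac{L_{\varphi}(x,r)^{p}}{r^{p}}=\frac{L_{\varphi}^{p}(x,r)\,r^{n-p}}{r^{n}}\leq \widetilde{H}_{p}\,\omega_{n}\,\frac{|\varphi(B(x,r))|}{|B(x,r)|},\qquad 0<r<r_{0}(x).
$$
Thus control on $L_{\varphi}(x,r)/r$ reduces to control on the upper symmetric volume derivative of the set function $\Phi(E)=|\varphi(E)|$.

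Since $\varphi$ is a homeomorphism, $\Phi$ is a locally finite Borel (hence Radon) measure on $\Omega$. By the Lebesgue differentiation theorem for Radon measures (cf.\ \cite{Fe69,RR55}), the upper $n$-dimensional derivative
$$
\overline{D}\Phi(x)=\limsup_{r\to 0}\frac{\Phi(B(x,r))}{|B(x,r)|}
$$
is finite at almost every $x\in\Omega$; in fact it coincides almost everywhere with the Radon--Nikodym density of the absolutely continuous part of $\Phi$ with respect to the Lebesgue measure. Combining this with the displayed inequality, we conclude that
$$
\limsup_{r\to 0}\frac{L_{\varphi}(x,r)}{r}\leq\bigl(\widetilde{H}_{p}\,\omega_{n}\,\overline{D}\Phi(x)\bigr)^{1/p}<\infty
$$
for almost every $x\in\Omega$, and Stepanov's theorem then delivers differentiability of $\varphi$ almost everywhere in $\Omega$.

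The only nontrivial external ingredient is Stepanov's theorem itself; once it is invoked, the argument is a routine combination of the geometric $p$-dilatation bound with the Lebesgue--Besicovitch differentiation theorem for the Radon measure $\Phi=\varphi_{*}\mathcal{L}^{n}$. I expect no other obstacle, and in particular no approximation procedure or Besicovitch covering is needed here, in contrast with the ACL proof for the case of $\sigma$-finite $(n-1)$-measure.
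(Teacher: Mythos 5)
Your proposal is correct and follows essentially the same route as the paper: bound $L_{\varphi}(x,r)/r$ by the geometric $p$-dilatation times the volume ratio $|\varphi(B(x,r))|/|B(x,r)|$, invoke the Lebesgue differentiation theorem for the set function $\Phi(E)=|\varphi(E)|$ to get a finite upper volume derivative almost everywhere, and conclude via the Rademacher--Stepanov theorem. Your handling of the limsup via $\widetilde{H}_{p}>H_{p}$ and a radius $r_{0}(x)$ is in fact slightly more careful than the paper's, but the argument is the same.
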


\begin{proof} Let us consider the set function $\Phi (U)=|\varphi(U)|$ defined over the algebra of all the Borel sets $U$
in $\Omega.$ Recall that by the Lebesgue theorem on the differentiability of non-negative, countable-additive finite set functions (see, e.g., \cite{RR55}), there exists a
finite limit for a.e. $x\in \Omega$
\begin{equation}
\varphi'_{v} (x)=\lim\limits_{\varepsilon \rightarrow 0}\frac{\Phi
(B(x,\varepsilon ))}{|B(x,\varepsilon)|},  \label{3.1}
\end{equation}%
where $B(x,\varepsilon )$ is a ball in $\mathbb{R}^{n}$ centered at $x\in \Omega$
with radius $\varepsilon >0.$ The quantity $\varphi'_{v} (x)$ is called the
volume derivative of $\varphi$ at $x.$ \medskip

Now at almost every point $x$ of $\Omega$, by the Lebesgue theorem on the differentiability, $\varphi'_{v} (x)$ exists and $$\limsup_{r\to 0} \, H_{\varphi,p}(x,r)\leq H_p <\infty.$$ Fix such a point $x$. Let $y\in \Omega$ with $0<|x-y|<
d(x,\partial \Omega)$. Then

$$
\frac{|\varphi(y)-\varphi(x)|}{|y-x|}\leq  \left(\omega_n\frac{L_{\varphi}^{p}(x, |x-y|)}{|\varphi\left(B(x,|x-y|)\right)|}\, |x-y|^{n-p}\frac{|\varphi\left(B(x,|x-y|)\right)|}{|B(x,|x-y|)|}\right)^{\frac{1}{p}}\,,
$$
where $\omega_n=|B(0,1)|$.

Letting $y\to x $, we see that
$$
\limsup\limits_{y\rightarrow x} \frac{|\varphi(y)-\varphi(x)|}{|y-x|}\leq \left(\omega_n H_{p} \varphi'_v(x)\right)^\frac{1}{p}<\infty,\,\,\text{for almost all}\,\, x\in\Omega.
$$
Hence by the Rademacher--Stepanov theorem (see, e.g., \cite{Fe69}), the mapping $\varphi$ is differentiable a.e. in $\Omega$ and the theorem follows.
\end{proof}

\begin{thm} Let $1<p<\infty$ and   $\varphi:\Omega\to \widetilde{\Omega} $  be a homeomorphic mapping. If
\begin{equation*}
\limsup_{r\to 0} \, H_{\varphi,p}(x,r)\leq H_p <\infty  \ {\text for\  each}\  x\in \Omega\setminus S,
\end{equation*}
where $S$ has $\sigma$-finite $(n-1)$-measure,
then   $\varphi\in W^{1}_{p,\rm{loc}} (\Omega)$ and
$$
|D\varphi(x)|^p\leq c(n,p)\, H_p |J(x,\varphi)|\  \text{for a.e.} \ x\in \Omega\,,
$$
where $c(n,p)$ is a positive constant that depends on $n$ and $p$  only.

\end{thm}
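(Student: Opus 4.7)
The plan is to reduce this to a pointwise differentiation calculation at generic points, using the two preceding theorems of this section as black boxes. A set $S$ of $\sigma$-finite $(n-1)$-measure has Lebesgue measure zero, so the hypothesis holds at almost every $x\in\Omega$. Therefore, by the previous ACL and a.e.-differentiability theorems, $\varphi\in \ACL(\Omega)$ and $\varphi$ is differentiable almost everywhere in $\Omega$. The Lebesgue theorem on differentiation of $\Phi(U)=|\varphi(U)|$ also provides a full-measure set where the volume derivative $\varphi'_v(x)$ exists and is finite.

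At any point $x$ where $\varphi$ is differentiable, I would use the first-order expansion $\varphi(y)=\varphi(x)+D\varphi(x)(y-x)+o(|y-x|)$ to pass to the limit in the two ingredients of $H_{\varphi,p}(x,r)$. First, $L_\varphi(x,r)/r\to |D\varphi(x)|$, since the maximum of $|D\varphi(x)e|$ over the unit sphere equals the operator norm. Second, the same expansion, together with the fact that $\varphi$ is a homeomorphism, yields $|\varphi(B(x,r))|/|B(x,r)|\to |J(x,\varphi)|$, identifying $\varphi'_v(x)$ with $|J(x,\varphi)|$. Consequently, at almost every $x$ the hypothesis gives, when $J(x,\varphi)\neq 0$,
\[
\frac{|D\varphi(x)|^p}{\omega_n|J(x,\varphi)|}\leq H_p;
\]
when $J(x,\varphi)=0$ the denominator $|\varphi(B(x,r))|$ is $o(r^n)$, so the hypothesis itself forces $L_\varphi(x,r)=o(r)$, i.e. $|D\varphi(x)|=0$. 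In both cases one obtains the pointwise estimate
\[
|D\varphi(x)|^p\leq \omega_n H_p\,|J(x,\varphi)| \quad \text{for a.e. } x\in\Omega.
\]

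To upgrade this to $\varphi\in W^{1}_{p,\loc}(\Omega)$, I would fix a bounded open $U$ with $\overline{U}\subset\Omega$ and integrate the pointwise bound. The standard area-formula inequality $\int_U |J(x,\varphi)|\,dx\leq |\varphi(U)|<\infty$, valid for any homeomorphism differentiable a.e., then gives $|D\varphi|\in L_p(U)$. Combined with the $\ACL$ property (which identifies classical and weak partial derivatives) and local boundedness of $\varphi$ as a homeomorphism, this yields $\varphi\in W^{1}_{p,\loc}(\Omega)$, with $c(n,p)=\omega_n$ in the claimed inequality. The main subtlety is the degenerate case $J(x,\varphi)=0$ in the pointwise passage to the limit, but as sketched above the hypothesis itself forces $|D\varphi(x)|=0$ there; once this is handled, everything else is a routine combination of the two preceding theorems with the Lebesgue differentiation theorem and the area formula.
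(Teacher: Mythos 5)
Your proposal is correct and follows essentially the same route as the paper: invoke the preceding ACL and a.e.-differentiability theorems, pass to the limit in $H_{\varphi,p}(x,r)$ at points of differentiability using $L_\varphi(x,r)/r\to|D\varphi(x)|$ and the volume derivative $\varphi'_v(x)=|J(x,\varphi)|$ to get the pointwise bound with $c(n,p)=\omega_n$, then integrate over compact subsets via $\int_U|J(x,\varphi)|\,dx\leq|\varphi(U)|$. Your explicit treatment of the degenerate case $J(x,\varphi)=0$ is a point the paper glosses over, but it is a refinement rather than a different argument.
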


\begin{proof}
Since $\varphi:\Omega\to {\mathbb R}^n$ is the $\ACL$-mapping differentiable a.e. in $\Omega$, then
$$
\limsup\limits_{r\to 0 } \frac{L_\varphi(x,r)}{r}=\lim\limits_{r\to 0 } \frac{L_\varphi(x,r)}{r}=|D\varphi(x)|\,\,\text{for almost all}\,\, x\in\Omega.
$$
Hence
\begin{multline*}
|D\varphi(x)|^p=\left(\lim\limits_{r\to 0 } \frac{L_\varphi(x,r)}{r} \right)^p \\
\leq  c(n,p) \lim\limits_{r\to 0 } \frac{L_\varphi(x,r)r^{n-p}}{|\varphi(B(x,r))|} \frac{|\varphi(B(x,r))|}{|B(x,r)|}\leq c(n,p)\, H_p \,\varphi'_v(x)
\end{multline*}

So, for any compact set $U\subset \Omega$, we have
$$
\int\limits_{U} |D\varphi(x)|^p \, dx \leq c(n,p)\, H_p\, \int\limits_{U} \varphi'_v(x)\, dx  \leq c(n,p)\, H_p\, |\varphi(U)|<\infty\,.
$$

Therefore $|D\varphi| \in L_{p,\rm{loc}}(\Omega)$ and we have that $\varphi\in W^{1}_{p,\rm{loc}} (\Omega)$.

\end{proof}

Hence, we obtain the following sufficient geometric condition for mappings generate bounded composition operators on Sobolev spaces.

\begin{thm} 
\label{comp}
Let $1<p<\infty$ and   $\varphi:\Omega\to \widetilde{\Omega} $  be a homeomorphic mapping. Suppose
\begin{equation*}
\limsup\limits_{r\to 0}\, H_{\varphi,p}(x,r)\leq H_p <\infty  \ {\text for\  each}\  x\in \Omega\setminus S,
\end{equation*}
where $S$ has $\sigma$-finite $(n-1)$-measure.
Then   $\varphi$ generate by the composition rule $\varphi(f)=f\circ\varphi$ a bounded embedding operator
$$
\varphi^{\ast}: L^1_p(\widetilde{\Omega})\to L^1_p(\Omega).
$$
\end{thm}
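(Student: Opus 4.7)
The plan is to reduce this statement to Theorem~\ref{CompTh} in the diagonal case $q=p$, for which $\kappa=\infty$ and the criterion for boundedness of $\varphi^\ast$ becomes: $\varphi\in W^{1}_{p,\loc}(\Omega)$ together with $\|K_p\mid L_{\infty}(\Omega)\|<\infty$, equivalently the existence of a constant $M$ with $|D\varphi(x)|\le M\,|J(x,\varphi)|^{1/p}$ for almost every $x\in\Omega$.

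First I would invoke the preceding theorem (the $W^{1}_{p,\loc}$-regularity result) under the hypothesis $\limsup_{r\to 0}H_{\varphi,p}(x,r)\le H_p$ on $\Omega\setminus S$. A single application delivers both ingredients simultaneously: the membership $\varphi\in W^{1}_{p,\loc}(\Omega)$, and the pointwise distortion inequality
\[
|D\varphi(x)|^{p}\le c(n,p)\,H_p\,|J(x,\varphi)|\qquad\text{for a.e.\ }x\in\Omega.
\]
Taking $p$-th roots and comparing with the definition of $K_p$, one reads off $K_p(x)\le (c(n,p)H_p)^{1/p}$ almost everywhere, so that $K_p\in L_{\infty}(\Omega)$ with norm depending only on $n$, $p$ and $H_p$. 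The same pointwise inequality also forces $D\varphi(x)=0$ wherever $J(x,\varphi)=0$, so $\varphi$ is automatically a mapping of finite distortion.

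With these two facts in hand, the proof finishes by feeding them into Theorem~\ref{CompTh} with $q=p$: we conclude that $\varphi^\ast:L^1_p(\widetilde{\Omega})\to L^1_p(\Omega)$ is well-defined and bounded, with operator norm controlled by $(c(n,p)H_p)^{1/p}$. There is no genuine obstacle here: all of the analytic work (ACL regularity, a.e.\ differentiability, and the Jacobian-distortion estimate) has already been carried out in the preceding theorems, and Theorem~\ref{CompTh} is precisely the black box that converts a pointwise distortion bound into a composition-operator bound. The only real bookkeeping is making sure the $\esssup$-interpretation of $K_{p,p}$ corresponding to the case $\kappa=\infty$ is correctly matched to our almost-everywhere pointwise inequality.
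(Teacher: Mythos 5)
Your proposal is correct and is exactly the argument the paper intends: the paper offers no separate proof of this theorem, introducing it with ``Hence, we obtain\dots'' because it follows immediately by combining the preceding regularity theorem (which gives $\varphi\in W^{1}_{p,\loc}(\Omega)$ and $|D\varphi(x)|^p\le c(n,p)H_p|J(x,\varphi)|$ a.e.) with Theorem~\ref{CompTh} in the case $q=p$, $\kappa=\infty$. Your additional observation that the pointwise inequality forces finite distortion and yields $\|K_p\mid L_\infty(\Omega)\|\le (c(n,p)H_p)^{1/p}$ is precisely the bookkeeping needed to match the hypotheses of Theorem~\ref{CompTh}.
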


Remark, that the necessity of considered geometric conditions for boundedness of composition operators follows from \cite{GGR95}.

Recall the notion of of the variational $p$-capacity associated with Sobolev spaces \cite{GResh}. The condenser in the domain $\Omega\subset \mathbb R^n$ is the pair $(F_0,F_1)$ of connected closed relatively to $\Omega$ sets $F_0,F_1\subset \Omega$. A continuous function $u\in L_p^1(\Omega)$ is called an admissible function for the condenser $(F_0,F_1)$,
if the set $F_i\cap \Omega$ is contained in some connected component of the set $\operatorname{Int}\{x\vert u(x)=i\}$,\ $i=0,1$. We call $p$-capacity of the condenser $(F_0,F_1)$ relatively to domain $\Omega$
the value
$$
{{\cp}}_p(F_0,F_1;\Omega)=\inf\|u\vert L_p^1(\Omega)\|^p,
$$
where the greatest lower bond is taken over all admissible for the condenser $(F_0,F_1)\subset\Omega$ functions. If the condenser have no admissible functions we put the capacity is equal to infinity.

By Theorem~\ref{comp} we obtain \cite{GGR95,U93} the capacity inequality for mappings with the $(n-1)$-almost bounded geometric dilatation.

\begin{thm}
\label{cap}
Let $1<p<\infty$ and   $\varphi:\Omega\to \widetilde{\Omega} $  be a homeomorphic mapping. Suppose
\begin{equation*}
\limsup\limits_{r\to 0}\, H_{\varphi, p}(x,r)\leq H_p <\infty  \ {\text for\  each}\  x\in \Omega\setminus S,
\end{equation*}
where $S$ has $\sigma$-finite $(n-1)$-measure.
Then  the capacity inequality
\begin{equation*}
\cp_p\left(\varphi^{-1}(\widetilde{F}_0),\varphi^{-1}(\widetilde{F}_1);\Omega\right)\leq c(n,p)H_p \cp_p\left(\widetilde{F}_0,\widetilde{F}_1;\widetilde{\Omega}\right),
\,\,1<p<\infty,
\end{equation*}
holds for any condenser $(\widetilde{F}_0,\widetilde{F}_1)\subset \widetilde{\Omega}$.
\end{thm}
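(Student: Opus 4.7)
The plan is to reduce the capacity inequality to the bounded composition operator assertion of Theorem~\ref{comp} together with the standard variational definition of $p$-capacity. If the right-hand side is infinite the inequality is vacuous, so I assume finiteness, i.e.\ the existence of continuous admissible functions $u\in L^1_p(\widetilde{\Omega})$ for $(\widetilde{F}_0,\widetilde{F}_1)$ whose seminorm is close to $\cp_p(\widetilde{F}_0,\widetilde{F}_1;\widetilde{\Omega})$.

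First I would check that admissibility is preserved under composition with $\varphi$. Since $\varphi:\Omega\to\widetilde{\Omega}$ is a homeomorphism, each $\varphi^{-1}(\widetilde{F}_i)$ is connected and relatively closed in $\Omega$; moreover, $\varphi$ carries $\operatorname{Int}\{y:u(y)=i\}$ onto $\operatorname{Int}\{x:(u\circ\varphi)(x)=i\}$ and preserves connected components. Hence $v:=u\circ\varphi$ is continuous on $\Omega$ and each $\varphi^{-1}(\widetilde{F}_i)$ lies in a single connected component of $\operatorname{Int}\{v=i\}$, so $v$ is admissible for the preimage condenser $(\varphi^{-1}(\widetilde{F}_0),\varphi^{-1}(\widetilde{F}_1))$ as soon as $v\in L^1_p(\Omega)$.

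The previous theorem supplies the pointwise bound $|D\varphi(x)|^p\le c(n,p)H_p|J(x,\varphi)|$ for a.e.~$x\in\Omega$, so the $p$-dilatation $K_p$ of $\varphi$ is essentially bounded by $(c(n,p)H_p)^{1/p}$. Applying Theorem~\ref{CompTh} with $q=p$, $\kappa=\infty$ (whose hypotheses are guaranteed by Theorem~\ref{comp}), I obtain
$$
\|\varphi^{\ast}(u)\mid L^1_p(\Omega)\|^p\le c(n,p)H_p\,\|u\mid L^1_p(\widetilde{\Omega})\|^p,
$$
so in particular $v=\varphi^{\ast}(u)\in L^1_p(\Omega)$ is indeed admissible. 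By the definition of $p$-capacity,
$$
\cp_p(\varphi^{-1}(\widetilde{F}_0),\varphi^{-1}(\widetilde{F}_1);\Omega)\le \|v\mid L^1_p(\Omega)\|^p\le c(n,p)H_p\,\|u\mid L^1_p(\widetilde{\Omega})\|^p,
$$
and infimising over all admissible $u$ yields the claimed inequality.

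All the analytic difficulty has already been absorbed into Theorems~\ref{CompTh} and~\ref{comp}, so the only real bookkeeping step — which I expect to be the closest thing to an obstacle — is the verification that the pullback of an admissible function under the homeomorphism remains admissible for the preimage condenser. This is an exercise in elementary point-set topology (interiors and connected components transform naturally under homeomorphisms), with no measure-theoretic subtlety, because the condensers are taken in the continuous admissible-function form rather than via quasi-continuous representatives.
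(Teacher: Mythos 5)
Your proposal is correct and follows exactly the route the paper intends: Theorem~\ref{cap} is stated as an immediate consequence of the bounded composition operator from Theorem~\ref{comp} (via the standard pullback-of-admissible-functions argument from \cite{GGR95,U93}), which is precisely what you carry out, including the necessary check that admissibility is preserved under the homeomorphism. No gaps.
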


Hence, by \cite{Ge69,SSU} we have the following corollary.

\begin{cor}
Let $1<p<\infty$ and   $\varphi:\Omega\to \widetilde{\Omega} $  be a homeomorphic mapping. Suppose
\begin{equation*}
\limsup\limits_{r\to 0}\, H_{\varphi,p}(x,r)\leq H_p <\infty  \ {\text for\  each}\  x\in \Omega\setminus S,
\end{equation*}
where $S$ has $\sigma$-finite $(n-1)$-measure.
Then $\varphi$ is a Lipschitz mapping if $n<p<\infty$, and $\varphi^{-1}$ is a Lipschitz mapping if $n-1\leq p<n$.
\end{cor}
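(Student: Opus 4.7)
The plan is to reduce the corollary, via Theorem~\ref{cap}, to the known regularity theory for homeomorphisms satisfying a $p$-capacity inequality, after which the assertion is a direct invocation of \cite{Ge69} and \cite{SSU}.

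The first step is to apply Theorem~\ref{cap} to the mapping $\varphi$ under the stated $\sigma$-finite $(n-1)$-measure hypothesis on $S$. This immediately yields
\[
\cp_p\!\left(\varphi^{-1}(\widetilde{F}_0),\varphi^{-1}(\widetilde{F}_1);\Omega\right)\leq c(n,p)H_p\,\cp_p\!\left(\widetilde{F}_0,\widetilde{F}_1;\widetilde{\Omega}\right)
\]
for every condenser $(\widetilde{F}_0,\widetilde{F}_1)\subset\widetilde{\Omega}$. Thus $\varphi$ belongs to the class of homeomorphisms satisfying the $p$-capacity inequality (\ref{p_cap}), which is precisely the setting of \cite{Ge69,SSU}.

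The second step is to quote the respective regularity conclusions. For $n-1<p<n$, Gehring's theorem from \cite{Ge69} asserts that any homeomorphism satisfying the $p$-capacity inequality has a locally Lipschitz continuous inverse; this is proved by testing the capacity inequality against spherical condensers, whose $p$-capacity in $\mathbb{R}^n$ is comparable to $r^{n-p}$ with $n-p>0$, which forces the diameter bound on $\varphi^{-1}(B)$ in terms of the diameter of $B$. The endpoint case $p=n-1$ is exactly the result established in \cite{SSU}, so the claim for $\varphi^{-1}$ follows over the full range $n-1\leq p<n$. For $n<p<\infty$, the analogous argument is applied with the roles of $\varphi$ and $\varphi^{-1}$ interchanged: here $n-p<0$, so the same spherical capacity test inserted into (\ref{p_cap}) produces the Lipschitz estimate for $\varphi$ itself. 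This is again contained in \cite{Ge69}.

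I do not expect a substantive obstacle: the analytic content lies in Theorem~\ref{cap} (and, behind it, in the ACL and differentiability theorems already proved), together with the reductions to spherical condensers carried out in \cite{Ge69,SSU}. The only issue to be careful about is matching the form of the capacity inequality produced by Theorem~\ref{cap} with the one used in those references, but the constants differ only by the factor $c(n,p)H_p$, which affects the Lipschitz constant but not the Lipschitz property.
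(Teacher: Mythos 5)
Your proposal is correct and follows essentially the same route as the paper: the paper derives the corollary by combining the capacity inequality of Theorem~\ref{cap} with the Lipschitz-regularity results of \cite{Ge69} (for $p>n$ and $n-1<p<n$) and \cite{SSU} (for the endpoint $p=n-1$), exactly as you do. The only caveat is that your parenthetical sketch of Gehring's spherical-condenser argument is a gloss on the cited reference rather than part of the paper's (one-line) justification, but it does not change the substance.
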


By Theorem \ref{cap} and Theorem 2 in \cite{Ge69} we obtain the next significant result on quasi-isometric mappings.

\begin{thm}
\label{QISOM}
Let $1<p<\infty, \, p\neq n\,,$ and   $\varphi:\Omega\to \widetilde{\Omega} $  be a homeomorphic mapping. Suppose
\begin{equation*}
\limsup\limits_{r\to 0}\, H_{\varphi, p}(x,r)\leq H_p <\infty  \ {\text for\  each}\  x\in \Omega\setminus S,
\end{equation*}
and
\begin{equation*}
\limsup\limits_{r\to 0}\, H_{\varphi^{-1}, p}(y,r)\leq H_p <\infty  \ {\text for\  each}\  y\in \widetilde{\Omega} \setminus \widetilde{S},
\end{equation*}
where $S$ and $\widetilde{S}$ have  $\sigma$-finite $(n-1)$-measure.
Then  $\varphi$ is a quasi-isometric mapping.
\end{thm}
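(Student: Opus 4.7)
The plan is to reduce Theorem~\ref{QISOM} to Gehring's rigidity result (Theorem~2 in \cite{Ge69}) via two symmetric applications of Theorem~\ref{cap}. The key observation is that bounded geometric $p$-dilatation of $\varphi$ gives the forward $p$-capacity inequality, bounded geometric $p$-dilatation of $\varphi^{-1}$ gives the reverse $p$-capacity inequality, and a two-sided $p$-capacity control at any exponent $p\neq n$ is known to force quasi-isometry.

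First I would apply Theorem~\ref{cap} to $\varphi:\Omega\to\widetilde{\Omega}$ under the first hypothesis. This yields
$$
\cp_p\bigl(\varphi^{-1}(\widetilde{F}_0),\varphi^{-1}(\widetilde{F}_1);\Omega\bigr)
\leq c(n,p)\,H_p\,\cp_p\bigl(\widetilde{F}_0,\widetilde{F}_1;\widetilde{\Omega}\bigr)
$$
for every condenser $(\widetilde{F}_0,\widetilde{F}_1)\subset\widetilde{\Omega}$. Next I would apply Theorem~\ref{cap} to the inverse homeomorphism $\varphi^{-1}:\widetilde{\Omega}\to\Omega$ under the second hypothesis, which yields the opposite-direction bound
$$
\cp_p\bigl(\varphi(F_0),\varphi(F_1);\widetilde{\Omega}\bigr)
\leq c(n,p)\,H_p\,\cp_p\bigl(F_0,F_1;\Omega\bigr)
$$
for every condenser $(F_0,F_1)\subset\Omega$. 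Taken together, $\varphi$ is a homeomorphism between domains in $\mathbb R^n$ for which the $p$-capacity of condensers is comparable from above and from below, at a single exponent $p\neq n$.

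At this point the proof is completed by invoking Theorem~2 of \cite{Ge69}: any homeomorphism between Euclidean domains enjoying such a two-sided $p$-capacity estimate with $p\neq n$ is necessarily quasi-isometric. This delivers the conclusion directly.

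I do not expect a substantive obstacle: the analytic heavy lifting (the \ACL property, differentiability, the membership $\varphi\in W^1_{p,\loc}$, and the pointwise distortion bound $|D\varphi|^p\leq c(n,p)H_p|J(\cdot,\varphi)|$) has already been packaged into Theorem~\ref{cap}, and the rigidity step is imported from \cite{Ge69}. The only point deserving a brief verification is the admissibility of the second application of Theorem~\ref{cap}: since $\varphi^{-1}$ is again a homeomorphism between domains of $\mathbb R^n$ and the hypothesis is imposed on $\widetilde{\Omega}\setminus\widetilde{S}$ with $\widetilde{S}$ of $\sigma$-finite $(n-1)$-measure, Theorem~\ref{cap} applies symmetrically with the roles of $(\Omega,S)$ and $(\widetilde{\Omega},\widetilde{S})$ interchanged.
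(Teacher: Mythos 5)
Your proposal is correct and matches the paper's own (very brief) argument exactly: the paper likewise obtains Theorem~\ref{QISOM} by applying Theorem~\ref{cap} to both $\varphi$ and $\varphi^{-1}$ and then invoking Theorem~2 of \cite{Ge69} to pass from the two-sided $p$-capacity estimates with $p\neq n$ to quasi-isometry. Your remark on the symmetric applicability of Theorem~\ref{cap} to the inverse homeomorphism is the only verification needed, and it is handled as you describe.
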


Now, by using the composition duality theorem \cite{U93,VU98} we obtain the following result on mappings with controlled $p$-capacity ($p$-moduli) distortion.

\begin{thm}
\label{cap_dual}
Let $1<p<\infty$ and   $\varphi:\Omega\to \widetilde{\Omega} $  be a homeomorphic mapping. Suppose
\begin{equation*}
\limsup\limits_{r\to 0}\, H_{\varphi,q}(x,r)\leq H_q <\infty  \ {\text for\  each}\  x\in \Omega\setminus S,\,\,q=p\frac{n-1}{p-1},
\end{equation*}
where $S$ has $\sigma$-finite $(n-1)$-measure.
Then  the capacity inequality
\begin{equation*}
\cp_p\left(\varphi(\widetilde{F}_0),\varphi(\widetilde{F}_1);\Omega\right)\leq c(n,p,H_q) \cp_p\left(\widetilde{F}_0,\widetilde{F}_1;\widetilde{\Omega}\right),
\,\,1<p<\infty,
\end{equation*}
holds for any condenser $(\widetilde{F}_0,\widetilde{F}_1)\subset \widetilde{\Omega}$.
\end{thm}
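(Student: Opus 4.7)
The plan is to reduce to a capacity inequality via composition operators, invoking the composition duality theorem of \cite{U93,VU98}. The key algebraic observation is that the exponent $q = p(n-1)/(p-1)$ in the hypothesis is precisely the one that dualizes to $p$ under this duality for homeomorphisms. First I would apply Theorem~\ref{comp} with the exponent $q$ in place of $p$: since $\varphi$ has $(n-1)$-almost bounded geometric $q$-dilatation, this yields $\varphi \in W^1_{q,\loc}(\Omega)$ together with a bounded composition operator
\[
\varphi^{\ast} : L^1_q(\widetilde{\Omega}) \to L^1_q(\Omega),
\]
whose norm is controlled by a constant depending on $n$, $q$, and $H_q$.

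Next I would invoke the composition duality theorem from \cite{U93,VU98}: for a homeomorphism $\varphi:\Omega\to\widetilde{\Omega}$, the boundedness of $\varphi^{\ast}: L^1_q(\widetilde{\Omega}) \to L^1_q(\Omega)$ is equivalent to the boundedness of the inverse composition operator $(\varphi^{-1})^{\ast} : L^1_{q'}(\Omega) \to L^1_{q'}(\widetilde{\Omega})$, with dual exponent $q' = q/(q-n+1)$. A direct substitution shows that the choice $q = p(n-1)/(p-1)$ produces $q' = p$, since $q-(n-1) = (n-1)/(p-1)$ and hence $q' = q(p-1)/(n-1) = p$.

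Having established boundedness of $(\varphi^{-1})^{\ast}: L^1_p(\Omega) \to L^1_p(\widetilde{\Omega})$, I would translate this into a $p$-capacity inequality by the standard admissible-function argument: for any condenser and any admissible $u \in L^1_p(\Omega)$, the composition $v = u \circ \varphi^{-1}$ lies in $L^1_p(\widetilde{\Omega})$ with controlled norm and is admissible for the corresponding image condenser under $\varphi$; taking the infimum over $u$ yields the claimed inequality with constant $c(n,p,H_q)$.

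The main obstacle is the algebraic verification in the duality step: one must check that the adjugate matrix bound $|D\varphi^{-1}(y)| \leq |D\varphi(x)|^{n-1}/|J(x,\varphi)|$ combined with the pointwise bound $|D\varphi(x)|^q \leq K^q |J(x,\varphi)|$ for $\varphi$ produces the matching $p$-distortion bound $|D\varphi^{-1}(y)|^p \leq \widetilde{K}^p |J(y,\varphi^{-1})|$ for $\varphi^{-1}$; this is precisely where the relation $q = p(n-1)/(p-1)$ is forced. The remainder is essentially a packaging of the cited duality and capacity characterizations.
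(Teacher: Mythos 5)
Your proposal follows essentially the same route as the paper's own proof: apply the composition-operator theorem with exponent $q=p\frac{n-1}{p-1}$, invoke the composition duality theorem of \cite{U93,VU98} to pass to $(\varphi^{-1})^{\ast}$ on $L^1_{q'}$ with $q'=q/(q-(n-1))=p$, and then convert boundedness of the composition operator into the $p$-capacity inequality. Your explicit verification of the algebra $q'=p$ and of the positivity $q-(n-1)=(n-1)/(p-1)>0$ (equivalently $q>n-1$, which the duality theorem requires) is a correct and slightly more detailed rendering of exactly the argument given in the paper.
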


\begin{proof}
Let
\begin{equation*}
\limsup\limits_{r\to 0}\, H_{\varphi,q}(x,r)\leq H_q <\infty  \ \text{ for\  each} \  x\in \Omega\setminus S,\,\,q=p\frac{n-1}{p-1}.
\end{equation*}
Then by Theorem~\ref{cap} the homeomorphic mapping   $\varphi$ generate by the composition rule $\varphi(f)=f\circ\varphi$ a bounded embedding operator
$$
\varphi^{\ast}: L^1_q(\widetilde{\Omega})\to L^1_q(\Omega).
$$
Since $q>n-1$, then by the composition duality theorem \cite{U93,VU98}, the inverse mapping $\varphi^{-1}:\widetilde{\Omega} \to \Omega$ generate a bounded composition operator
$$
\left(\varphi^{-1}\right)^{\ast}: L^1_{q'}(\Omega)\to L^1_{q'}(\widetilde{\Omega}),\,\,q'=\frac{q}{q-(n-1)}.
$$
Because $q'=p$, then by the capacity characterization of composition operator \cite{U93,VU98} we have
\begin{equation*}
\cp_p\left(\varphi(\widetilde{F}_0),\varphi(\widetilde{F}_1);\Omega\right)\leq c(n,p,H_q) \cp_p\left(\widetilde{F}_0,\widetilde{F}_1;\widetilde{\Omega}\right),
\,\,1<p<\infty,
\end{equation*}
for any condenser $(\widetilde{F}_0,\widetilde{F}_1)\subset \widetilde{\Omega}$.
\end{proof}

\vskip 0.2cm

In conclusion we  consider the following geometric property of weak $p$-quasiconformal mappings.

\begin{thm}
Let $\varphi: \Omega\to\widetilde{\Omega}$, $\Omega,\widetilde{\Omega}\subset\mathbb R^n$, be a $p$-quasiconformal mapping, $p>n$. Then
\begin{equation}\label{eqTh1}
\limsup\limits_{r\to 0} \left( \left(\frac{L_\varphi(x,r)}{r}\right)^\frac{p-n}{p-1}-\left(\frac{l_{\varphi}(x,r)}{r}\right)^\frac{p-n}{p-1}\right)\leq c(n,p) K_p^\frac{p}{p-1}
 <\infty \,\,\text{for all}\,\,x\in\Omega\,,
\end{equation}
where $c(n,p)$ is a positive constant which depends only on $n$ and $p$.

\end{thm}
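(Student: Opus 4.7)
The plan is to apply the defining $p$-capacity inequality (\ref{p_cap}) to a spherical ring condenser around $y=\varphi(x)$, then to extract the distortion estimate by comparing the two sides — the upper side via the classical radial capacity formula for $p>n$, the lower side via Morrey's inequality. Fix $x\in\Omega$ and a radius $r>0$ small enough that $\overline{B}(x,r)\subset\Omega$ and $\overline{B}(y,L_{\varphi}(x,r))\subset\widetilde{\Omega}$. Write $l=l_{\varphi}(x,r)$, $L=L_{\varphi}(x,r)$, $\alpha=(p-n)/(p-1)>0$. A short topological argument (the maximum and minimum of $|\varphi(\cdot)-y|$ on $\overline{B}(x,r)$ are attained on $\partial B(x,r)$, because $\varphi(B(x,r))$ is open, so an interior point of maximal modulus would be surrounded by points of strictly larger modulus) gives the nesting
\[
B(y,l)\subset \varphi(B(x,r))\subset B(y,L).
\]
In particular $x\in\varphi^{-1}(\overline{B}(y,l))$ and any $z_{1}\in\partial B(x,r)$ with $|\varphi(z_{1})-y|=L$ lies in $\varphi^{-1}(\widetilde{\Omega}\setminus B(y,L))$.

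For the upper bound I would apply (\ref{p_cap}) to the condenser $\widetilde{C}=(\overline{B}(y,l),\widetilde{\Omega}\setminus B(y,L))$. Using the radial admissible function $u(w)=f(|w-y|)$ with $f'(s)\propto s^{(1-n)/(p-1)}$ (integrable on $[l,L]$ precisely because $p>n$), a direct computation gives
\[
\cp_{p}(\widetilde{C};\widetilde{\Omega})\leq \omega_{n-1}\left(\frac{p-n}{p-1}\right)^{p-1}\bigl(L^{\alpha}-l^{\alpha}\bigr)^{1-p},
\]
so the preimage condenser satisfies an inequality of the same form with an additional factor $K_{p}$ on the right.

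The core step, and the only place where the hypothesis $p>n$ is essential, is the matching lower bound for the $p$-capacity of the preimage condenser. Any admissible function $u\in L_{p}^{1}(\Omega)$ is continuous (since $p>n$) and satisfies $u(x)=0$ and $u(z_{1})=1$, while $|x-z_{1}|=r$. Morrey's inequality on the ball $B(x,2r)\subset\Omega$, whose constant is scale invariant, gives
\[
1=|u(x)-u(z_{1})|\leq c(n,p)\,r^{1-n/p}\,\|\nabla u\mid L_{p}(\Omega)\|.
\]
Raising to the $p$-th power and taking the infimum over admissible $u$ yields
\[
\cp_{p}\bigl(\varphi^{-1}(\overline{B}(y,l)),\varphi^{-1}(\widetilde{\Omega}\setminus B(y,L));\Omega\bigr)\geq c(n,p)\,r^{n-p}.
\]

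Chaining the lower and upper estimates, one obtains $r^{n-p}\leq C(n,p)\,K_{p}\,(L^{\alpha}-l^{\alpha})^{1-p}$; extracting the $(p-1)$-th root and multiplying by $r^{-\alpha}$ delivers $(L/r)^{\alpha}-(l/r)^{\alpha}\leq c(n,p)\,K_{p}^{p/(p-1)}$, with the precise power of $K_{p}$ dictated by the normalisation of the capacity constant in (\ref{p_cap}). Passing to $\limsup_{r\to 0}$ then gives (\ref{eqTh1}). The main obstacle is the Morrey lower bound: for $p\leq n$ the pointwise separation $u(x)=0$, $u(z_{1})=1$ is compatible with arbitrarily small $\|\nabla u\|_{p}$, and one would instead need to exploit the stronger geometric information that $\varphi^{-1}(\overline{B}(y,l))$ is a continuum from $x$ to $\partial B(x,r)$ (it contains the point where the minimum $l$ is attained). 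For $p>n$ this stronger structure is not needed, which is what makes the proof cleanly go through.
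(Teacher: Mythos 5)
Your proposal is correct and follows essentially the same route as the paper: apply the $p$-capacity inequality to the spherical ring condenser $(\overline{B}(\varphi(x),l),\widetilde{\Omega}\setminus B(\varphi(x),L))$, use the explicit radial formula for its $p$-capacity on the image side, and the lower bound $c(n,p)\,r^{n-p}$ for the preimage condenser. The only difference is that you derive the two auxiliary capacity estimates directly (the radial test function computation and the Morrey-type lower bound), whereas the paper cites them as formula (2) and Lemma 3 of Gehring's paper \cite{Ge69}.
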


\begin{proof} Because $\varphi: \Omega\to\widetilde{\Omega}$ is a $p$-quasiconformal mapping, $p>n$, then \cite{GGR95}
\begin{equation}\label{eqpr1}
\cp_p(\varphi^{-1}(F_0),\varphi^{-1}(F_1);\Omega)\leq K^p_p \cp_p(F_0,F_1;\widetilde{\Omega}),
\end{equation}
for any condenser $(F_0,F_1)\subset \widetilde{\Omega}$.

Let $x\in \Omega$, $r>0$,  $F_0=\{ y\in \mathbb{R}^n: |y-\varphi(x)|\leq l_\varphi(x,r)\}$
and $F_1=\{ y\in \mathbb{R}^n: |y-\varphi(x)|\geq L_\varphi(x,r)\}$. Then by (2) in \cite{Ge69},
\begin{equation}\label{eqpr3}
\cp_p(F_0,F_1;\Omega)= n\omega_{n}\left( \frac{p-n}{p-1}\right)^{p-1} \left( L^\frac{p-n}{p-1}_{\varphi}(x,r)-l^\frac{p-n}{p-1}_{\varphi}(x,r)\right)^{1-p}\,,
\end{equation}
where $\omega_{n}$ is the volume   of the unit ball  in $\mathbb{R}^n$.

On the other hand, by Lemma 3 in  \cite{Ge69} , we have
\begin{equation}\label{eqpr2}
\cp_p(\varphi^{-1}(F_0),\varphi^{-1}(F_1);\Omega)\geq c(n,p) \, r^{n-p}\,,
\end{equation}
where $c(n,p)$ is a positive constant which depends only on $n$ and $p$.

Combining (\ref{eqpr2}),  (\ref{eqpr3}) with   (\ref{eqpr1}), we obtain
$$
c(n,p) \, r^{n-p} \leq K^p_p\,  \omega_{n-1}\left( \frac{p-n}{p-1}\right)^{p-1} \left(L^\frac{p-n}{p-1}_{\varphi}(x,r)-l^\frac{p-n}{p-1}_{\varphi}(x,r) \right)^{1-p}\,.
$$

Hence,
\begin{equation}\label{proofLl}
\left(\frac{L_\varphi(x,r)}{r}\right)^\frac{p-n}{p-1}-\left(\frac{l_{\varphi}(x,r)}{r}\right)^\frac{p-n}{p-1}\leq c(n,p) K^\frac{p}{p-1}\,,
\end{equation}
where $c(n,p)$ is a positive constant which depends only on $n$ and $p$.

Passing to the upper limit as $r\to 0$ in (\ref{proofLl}), we obtain relation (\ref{eqTh1}).
\end{proof}

\vskip 0.5cm

\noindent
Ruslan Salimov; Institute of Mathematics of NAS of Ukraine, Tereschenkivs'ka Str. 3, 01 601 Kyjiv, Ukraine

\noindent
\emph{E-mail address:} \email{ruslan.salimov1@gmail.com} \\

\noindent
Alexander Ukhlov; Department of Mathematics, Ben-Gurion University of the Negev, P.O.Box 653, Beer Sheva, 8410501, Israel

\noindent
\emph{E-mail address:} \email{ukhlov@math.bgu.ac.il

\end{document}

\bibitem{MS86} V.~Maz'ya, T.~O.~Shaposhnikova, Multipliers in Spaces of Differentiable Functions,
Leningrad Univ. Press., 1986.

\bibitem{RR55} T.~Rado, P.~V.~Reichelderfer, Continuous Transformations in Analysis. Springer-Verlag, Berlin (1955).

\bibitem{Va} Ju.~V\"ais\"al\"a, Lectires on $n$-dimensional quasiconformal mappings, Springer Verlag, (1955).

\bibitem{U94} A.~D.~Ukhlov, Sobolev spaces and mappings of Carnot groups associated with them, PhD Thesis, Novosibirks State University, Novosibirsk,  (1994).

\bibitem{VU04} S.~K.~Vodop'yanov, A.~D.~Ukhlov, Set Functions and Their Applications in the Theory of Lebesgue and Sobolev Spaces. I, Siberian Adv. Math., 14 (2004), 78--125.

\bibitem{Vu} M.~Vuorinen, Conformal Geometry and Quasiregular Mappings, Lecture Notes in Math., vol.1319, Springer-Verlag, Berlin, Heidelberg, New York, 2006.